\DeclareMathOperator{\diag}{diag}
\newtheorem{theorem}{Theorem}[section]
\newtheorem{lemma}[theorem]{Lemma}
\theoremstyle{definition}
\newtheorem{remark}[theorem]{Remark}
\newtheorem{example}[theorem]{Example}
\title{A generalization of Larman--Rogers--Seidel's theorem}
\author{Hiroshi Nozaki}
\begin{document}
\maketitle
\begin{center}
	Graduate School of Information Sciences,
	Tohoku University \\
	Aramaki-Aza-Aoba 6-3-09,
	Aoba-ku,
	Sendai 980-8579,
	Japan\\ 
	nozaki@ims.is.tohoku.ac.jp
\end{center}
\renewcommand{\thefootnote}{\fnsymbol{footnote}}
\footnote[0]{2010 Mathematics Subject Classification: 51K05 (05B25).}

\begin{abstract}
A finite set $X$ in the $d$-dimensional Euclidean space is called an $s$-distance set 
if the set of Euclidean distances between any two distinct points of $X$ has size $s$. 
Larman--Rogers--Seidel proved that if the cardinality of a two-distance set is greater than $2d+3$, then there 
exists an integer $k$ such that $a^2/b^2=(k-1)/k$, where $a$ and $b$ are the distances. 
In this paper,  we give an extension of this theorem for any $s$. Namely, if the size of an $s$-distance set is greater 
than some value depending on $d$ and $s$, then certain functions of $s$ distances become integers.  
Moreover, we prove that if the size of $X$ is greater than the value, then the number of $s$-distance sets is finite. 
\end{abstract}

\textbf{Key words}: metric geometry, $s$-distance set, few distance set.
\section{Introduction} 
Let $\mathbb{R}^d$ be the $d$-dimensional Euclidean space. 
A subset $X$ in $\mathbb{R}^d$ is called an $s$-distance set, if $|A(X)|=s$ where $A(X)=\{d(x,y) \mid x,y 
\in X, x\ne y\}$, $d(x,y)$ is the Euclidean distance of $x$ and $y$, and $|\ast|$ denotes the cardinality. 
A basic problem is to determine the maximum possible cardinality of $s$-distance 
sets in $\mathbb{R}^d$ or in the unit sphere $S^{d-1}$ for a fixed $s$. 

An absolute upper bound for the cardinality of an $s$-distance set in $\mathbb{R}^d$ was 
given by Bannai, Bannai and Stanton, {\it i.e.} $|X| \leq \binom{d+s}{s}$ \cite{Bannai-Bannai-Stanton} (for $s=2$, the bound 
was proved in \cite{Blokhuis}). 
We also have the bound $|X|\leq \binom{d+s-1}{s}+\binom{d+s-2}{s-1}$ for an $s$-distance set in $S^{d-1}$ 
\cite{Delsarte-Goethals-Seidel}. 

 Larman, Rogers and Seidel \cite{Larman-Rogers-Seidel} proved a very useful theorem to determine the maximum cardinality of 
two-distance sets. Namely, if the cardinality of a two-distance set $X$ in 
$\mathbb{R}^d$ is at least $2d+4$, then there exists a positive integer $k$ such that 
$\alpha_1^2/\alpha_2^2=(k-1)/k$ where $A(X)=\{\alpha_1,\alpha_2\}$. 
Moreover, the integer $k$ is bounded 
above by $1/2+\sqrt{d/2}$. 
The condition $|X| \geq 2d+4$ was improved to $|X|\geq 2d+2$ in \cite{Neumaier}. 
There exists a $(2d+1)$-point two-distance set whose $\alpha_1^2/\alpha_2^2$ is not an integer.
Indeed, they are obtained from 
the spherical embedding of the conference graph \cite{Neumaier}. 
Since we may assume $\alpha_2=1$ for a two-distance set in $\mathbb{R}^d$, the distance $\alpha_1$ is determined by an integer $k$. 

The maximum cardinalities of two-distance 
sets were determined for $d\leq 8$ \cite{Croft, Kelly, Lisonek}. 
Larman--Rogers--Seidel's theorem helped the classification of maximum two-distance sets for $d\leq 7$ in \cite{Lisonek}.

Musin \cite{Musin} gave a certain general method to improve the known upper bounds
 for spherical two-distance sets. 
Let $\beta_1,\beta_2$ be the inner products between distinct points of a spherical two-distance set, and $k$ be the ratio of 
Larman--Rogers--Seidel's theorem. 
In his method, one inner product $\beta_1$ is expressed by a certain function of an integer $k$ and the other inner product $\beta_2$ 
by Larman--Rogers--Seidel's theorem. 
This is one of the key ideas in \cite{Musin}, and the maximum cardinalities 
of two-distance sets in $S^{d-1}$ were determined for $7 \leq d \leq 21$ and $24 \leq d \leq 39$. 
 
Larman--Rogers--Seidel's theorem can be expected a lot of applications, and 
its extension for any $s$ is one of most important problems in the theory 
of few distance sets. 

In the present paper, we give a generalization of Larman--Rogers--Seidel's theorem for any $s$. 
Namely, if the 
cardinality of an $s$-distance set $X\subset \mathbb{R}^d$ is at least $ 2 \binom{d+s-1}{s-1}+ 2 \binom{d+s-2}{s-2}$, then 
\[
\prod_{i=1,2,\ldots,s,i \ne j} \frac{\alpha_j^2}{\alpha_j^2-\alpha_i^2}
\] is an integer $k_i$ for each $i=1,2, \ldots, s$, where 
$A(X)=\{ \alpha_1, \alpha_2, \ldots ,\alpha_s\}$. Moreover, $|k_i|$ is bounded by a certain function of $s$ and 
$d$. 
For $s=2$, $k_1=\alpha_2^2/(\alpha_2^2-\alpha_1^2)$ is an integer, and we can transform the equality to $\alpha_1^2/\alpha_2^2=(k_1-1)/k_1$. This theorem is exactly an extension of Larman--Rogers--Seidel's theorem. 
Furthermore we show that the distances $\alpha_i$ ($\alpha_s=1$) are uniquely determined from 
given integers $k_i$.  

A problem about the finiteness of the number of $s$-distance sets is also interesting. 
Einhorn--Schoenberg \cite{Einhorn-Schoenberg} proved that there are finitely many two-distance sets $X$ with $|X|\geq d+2$. 
Actually we have infinitely many two-distance sets $X$ in $\mathbb{R}^d$ with $|X|=d+1$ \cite{Einhorn-Schoenberg}.
In Section $4$, we prove that there are finitely many $s$-distance sets $X$ in $\mathbb{R}^d$ 
with $|X|\geq 2\binom{d+s-1}{s-1}+2\binom{d+s-2}{s-2}$. 
The result of Einhorn--Schoenberg is much better for $s=2$, but our result is new for $s > 2$.

In Section $5$, we show a generalization of Larman--Rogers--Seidel's theorem for spherical $s$-distance sets. 
The statement is the same as the Euclidean case except for the condition $|X|\geq 2\binom{d+s-2}{s-1}+2 \binom{d+s-3}{s-2}$. 
The inequality is not restrictive in comparison with the Euclidean case.   

A finite $X\subset \mathbb{R}^d$ is said to be antipodal if $-x \in X$ for every $x \in X$. 
The cardinality of an antipodal $s$-distance set in $S^{d-1}$ is bounded above by $2\binom{d+s-2}{s-1}$ 
\cite{Delsarte-Goethals-Seidel2, Delsarte-Goethals-Seidel}. 
Antipodal spherical $s$-distance sets are closely related with spherical $t$-designs \cite{Delsarte-Goethals-Seidel} 
or Euclidean lattices. 
Indeed, a tight (or minimal) spherical $(2s-1)$-design becomes an antipodal $s$-distance set 
whose size is $2 \binom{d+s-2}{s-1}$ \cite{Delsarte-Goethals-Seidel}. 
In Section 5, we prove that if an antipodal spherical $s$-distance set $X$ has sufficiently large size, 
then every usual inner product of any distinct two points in $X$ is a rational number.

\section{Preliminaries}
We prepare some notation and results.
Let ${\rm Hom}_l(\mathbb{R}^d)$ be the linear space of all real homogeneous polynomials of degree $l$, with $d$ variables.  We define $P_l(\mathbb{R}^d):=\sum_{i=0}^l {\rm Hom_i(\mathbb{R}^d)}$ and $P_l^{\ast}(\mathbb{R}^d):=\sum_{i=0}^{\lfloor l/2 \rfloor} {\rm Hom}_{l-2i}(\mathbb{R}^d)$. 
 Let $\xi_1$, $\xi_2$, $\ldots$, $\xi_d$ be independent variables, and let $\xi_0= \xi_1^2+ \xi_2^2+ \cdots +\xi_d^2$.  We define $W_l(\mathbb{R}^d)$ to be the linear space spanned by the monomials $\xi_0^{\lambda_0}\xi_1^{\lambda_1} \cdots \xi_d^{\lambda_d}$ with $\lambda_0+\lambda_1+ \cdots +\lambda_d \leq l$ and $\lambda_i \geq0$. Let $P_l(X)$, $P_l^{\ast}(X)$ and $W_l(X)$ be the linear spaces of all functions which are the restrictions of
the corresponding polynomials to $X \subset \mathbb{R}^d$. Then we know the dimensions of the following linear spaces. 
	\begin{theorem}[\cite{Bannai-Bannai-1, Delsarte-Goethals-Seidel}] \quad \\
	{\rm (i)} $\dim P_l(\mathbb{R}^d)= \binom{d+l}{l}$.\\
	{\rm (ii)} $\dim P_l(S^{d-1})= \binom{d+l-1}{l}+\binom{d+l-2}{l-1}$. \\ 
	{\rm (iii)} $\dim P_l^{\ast}(S^{d-1})= \binom{d+l-1}{l}$. \\
	{\rm (iv)} $\dim W_l(\mathbb{R}^d)=\binom{d+l}{l}+\binom{d+l-1}{l-1}$. 
	\end{theorem}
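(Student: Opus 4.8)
The four identities are all instances of the same mechanism: decompose each polynomial space into homogeneous (and, on the sphere, harmonic) graded pieces, and then reduce everything to counting monomials and to the hockey-stick identity $\sum_{i=0}^{m}\binom{d+i-1}{i}=\binom{d+m}{m}$ (which is just $\binom{d+i-1}{i}=\binom{d+i-1}{d-1}$ summed). Let me write $N_c:=\dim{\rm Harm}_c(\mathbb{R}^d)=\binom{d+c-1}{c}-\binom{d+c-3}{c-2}$ for the dimension of the harmonic homogeneous polynomials of degree $c$, with the convention that a binomial coefficient with negative lower index is $0$ (so $N_0=1$, $N_1=d$). Part (i) needs nothing more than the elementary count: $P_l(\mathbb{R}^d)=\bigoplus_{i=0}^{l}{\rm Hom}_i(\mathbb{R}^d)$, the degree-$i$ monomials give $\dim{\rm Hom}_i(\mathbb{R}^d)=\binom{d+i-1}{i}$, and summing over $i$ by the hockey-stick identity yields $\binom{d+l}{l}$.

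For (ii) and (iii) I would invoke the two classical facts about harmonics: the orthogonal decomposition ${\rm Hom}_m(\mathbb{R}^d)=\bigoplus_{j\ge 0}\xi_0^{\,j}{\rm Harm}_{m-2j}(\mathbb{R}^d)$, and the fact that harmonics of distinct degrees restrict to $S^{d-1}$ as linearly independent functions. On the sphere $\xi_0=1$ collapses the grading, so $P_l(S^{d-1})\cong\bigoplus_{c=0}^{l}{\rm Harm}_c(\mathbb{R}^d)|_{S^{d-1}}$ and $\dim P_l(S^{d-1})=\sum_{c=0}^{l}N_c$. Substituting $N_c$, the part $\sum_c\binom{d+c-1}{c}$ telescopes to $\binom{d+l}{l}$ and the part $\sum_c\binom{d+c-3}{c-2}$ telescopes to $\binom{d+l-2}{l-2}$; two applications of Pascal's rule rewrite $\binom{d+l}{l}-\binom{d+l-2}{l-2}$ as $\binom{d+l-1}{l}+\binom{d+l-2}{l-1}$, giving (ii). For (iii) the same collapse applies, but $P_l^{\ast}(S^{d-1})=\bigoplus_i{\rm Hom}_{l-2i}(\mathbb{R}^d)|_{S^{d-1}}$ reduces to ${\rm Hom}_l(S^{d-1})$, i.e. only the degrees $c\le l$ with $c\equiv l\pmod 2$ survive; the sum $\sum_{c\equiv l\,(2)}N_c$ is now a same-parity telescope whose bottom term vanishes, leaving exactly $\binom{d+l-1}{l}$.

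For (iv) the key is to locate $W_l(\mathbb{R}^d)$ inside the harmonic grading. Applying the decomposition above to each generator shows that every monomial $\xi_0^{\lambda_0}\xi_1^{\lambda_1}\cdots\xi_d^{\lambda_d}$ with $\sum_i\lambda_i\le l$ lies in $\bigoplus_{b+c\le l}\xi_0^{\,b}{\rm Harm}_c(\mathbb{R}^d)$ (a term $\xi_0^{a}m$ with $\deg m=c+2j$ contributes $\xi_0^{a+j}h$ with $(a+j)+c\le a+\deg m\le l$), and conversely each such summand is spanned by these generators; since the $\xi_0^{\,b}{\rm Harm}_c$ are independent, this gives $W_l(\mathbb{R}^d)=\bigoplus_{b+c\le l}\xi_0^{\,b}{\rm Harm}_c(\mathbb{R}^d)$. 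Hence
\[
\dim W_l(\mathbb{R}^d)=\sum_{b+c\le l}N_c=\sum_{k=0}^{l}\sum_{c=0}^{k}N_c=\sum_{k=0}^{l}\dim P_k(S^{d-1}),
\]
and inserting (ii) and applying the hockey-stick identity to each of the two resulting binomial sums produces $\binom{d+l}{l}+\binom{d+l-1}{l-1}$.

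The binomial bookkeeping in all four parts is routine; the single substantive input, which I would cite (this is precisely where \cite{Bannai-Bannai-1, Delsarte-Goethals-Seidel} enter) or prove as a separate lemma, is that harmonics of different degrees remain linearly independent after restriction to $S^{d-1}$ — equivalently, that the only relations among the generators on the sphere are those generated by $\xi_0-1$. This is the one step where passing to the sphere does something non-formal; granting it, the harmonic decomposition and the hockey-stick identity force all four dimension counts.
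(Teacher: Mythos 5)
The paper does not actually prove this theorem; it is quoted with citations to Bannai--Bannai and Delsarte--Goethals--Seidel, so there is no internal proof to compare against. Your argument is correct and is essentially the standard one found in those references: part (i) is the monomial count plus the hockey-stick identity; parts (ii)--(iv) rest on the harmonic decomposition ${\rm Hom}_m(\mathbb{R}^d)=\bigoplus_{j}\xi_0^{\,j}{\rm Harm}_{m-2j}(\mathbb{R}^d)$ together with the facts that restriction to $S^{d-1}$ is injective on each ${\rm Harm}_c$ and that harmonics of distinct degrees remain independent on the sphere (mutual orthogonality in $L^2(S^{d-1})$); your telescoping of $N_c=\binom{d+c-1}{c}-\binom{d+c-3}{c-2}$ and the identification $W_l(\mathbb{R}^d)=\bigoplus_{b+c\le l}\xi_0^{\,b}{\rm Harm}_c(\mathbb{R}^d)$ both check out, and the final binomial manipulations are all correct. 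You rightly isolate the one non-formal input (independence of the restricted harmonics, equivalently that the ideal of $S^{d-1}$ is generated by $\xi_0-1$), which is exactly what the cited sources supply; for (iii) you could even shortcut the telescope by noting $P_l^{\ast}(S^{d-1})={\rm Hom}_l(S^{d-1})$ and that restriction is injective on ${\rm Hom}_l$ by homogeneity, giving $\binom{d+l-1}{l}$ directly.
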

We prove several lemmas that will be needed later.  
	\begin{lemma} \label{lem}
	Let $\mathcal{P}(\mathbb{R}^d)$ be a linear subspace of $P_l(\mathbb{R}^d)$, and $X$ a subset of $\mathbb{R}^d$. 
	Let $p_1, p_2, \ldots, p_m \in \mathcal{P}(\mathbb{R}^d)$, and $x_1, x_2, \ldots, x_n \in X$. Let $M$ be the $m \times n$ matrix whose $(i,j)$-entry is $p_i(x_j)$. Then the rank of $M$ is at most $\dim \mathcal{P}(X)$.     
	\end{lemma}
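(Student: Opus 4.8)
The plan is to realize the rows of $M$ as the images of $p_1,\ldots,p_m$ under a single linear evaluation map, and then to observe that this map cannot distinguish a polynomial from its restriction to $X$, so it factors through $\mathcal{P}(X)$.

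First I would define the evaluation map $\mathrm{ev}\colon \mathcal{P}(\mathbb{R}^d) \to \mathbb{R}^n$ by $\mathrm{ev}(p)=(p(x_1),p(x_2),\ldots,p(x_n))$. This is linear, since evaluation at each fixed point $x_j$ is a linear functional of $p$. By construction the $i$-th row of $M$ is exactly $\mathrm{ev}(p_i)$, so the row space of $M$ is contained in the image $\mathrm{ev}(\mathcal{P}(\mathbb{R}^d))$. Because the rank of a matrix equals the dimension of its row space, this already gives $\operatorname{rank} M \leq \dim \mathrm{ev}(\mathcal{P}(\mathbb{R}^d))$.

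Next I would exploit the fact that every $x_j$ lies in $X$. Let $\rho\colon \mathcal{P}(\mathbb{R}^d)\to \mathcal{P}(X)$ be the restriction map $p\mapsto p|_X$, which is surjective by the very definition of $\mathcal{P}(X)$. Since the points $x_j$ belong to $X$, the value $\mathrm{ev}(p)$ depends only on $p|_X$; concretely, if $\rho(p)=\rho(q)$ then $p(x_j)=q(x_j)$ for all $j$, so $\mathrm{ev}(p)=\mathrm{ev}(q)$. Hence $\mathrm{ev}$ descends to a well-defined linear map $\overline{\mathrm{ev}}\colon \mathcal{P}(X)\to \mathbb{R}^n$ with $\mathrm{ev}=\overline{\mathrm{ev}}\circ\rho$. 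As $\rho$ is surjective, $\mathrm{ev}(\mathcal{P}(\mathbb{R}^d))=\overline{\mathrm{ev}}(\mathcal{P}(X))$, and the latter, being the image of a linear map defined on $\mathcal{P}(X)$, has dimension at most $\dim \mathcal{P}(X)$. Chaining the two inequalities yields $\operatorname{rank} M \leq \dim \mathcal{P}(X)$.

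This argument is essentially bookkeeping, so I do not expect any serious obstacle; the only point requiring care is the factorization through $\mathcal{P}(X)$, namely verifying that two polynomials agreeing on $X$ produce identical rows of $M$. This is immediate precisely because the sampling points $x_1,\ldots,x_n$ are chosen from $X$, so the evaluation map literally cannot see any information beyond the restriction to $X$. (A symmetric argument using columns and the linear functionals $x\mapsto (p_1(x),\ldots,p_m(x))$ would also work, but the row-space formulation is the most direct.)
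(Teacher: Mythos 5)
Your proof is correct and rests on the same observation as the paper's: the rows of $M$ are determined by the restrictions $p_i|_X$, so linearly independent rows force linearly independent elements of $\mathcal{P}(X)$, bounding the rank by $\dim\mathcal{P}(X)$. The paper states this in one line as a proof by contradiction, while you spell it out via the factorization of the evaluation map through $\mathcal{P}(X)$; the content is the same.
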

	\begin{proof}
	If the rank of $M$ is more than $\dim \mathcal{P}(X)$, then we have more than $\dim \mathcal{P}(X)$ linearly independent polynomials on $X$, it is a contradiction.
	\end{proof}
	
	\begin{lemma} \label{k_bound}
	Let $M$ be a symmetric matrix of size $n$. Let $e$ be an eigenvalue of multiplicity at least $m$. 
	If the diagonal entries of $M$ are all $0$ and the non-diagonal entries are $0$ or $\pm 1$, then $e^2 \leq (n-1)(n-m)/m$. 
	\end{lemma}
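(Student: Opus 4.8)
The plan is to exploit two elementary spectral identities coming from the constraints on $M$, and then to apply the Cauchy--Schwarz inequality to the eigenvalues \emph{other} than $e$. Let $\lambda_1, \lambda_2, \ldots, \lambda_n$ denote the eigenvalues of $M$ (with multiplicity). First I would record that, since every diagonal entry of $M$ is zero, the trace vanishes, so $\sum_{i=1}^n \lambda_i = 0$. Next, since $M$ is symmetric with off-diagonal entries in $\{0,\pm 1\}$, the sum of squares of the eigenvalues equals the trace of $M^2$, namely $\sum_{i=1}^n \lambda_i^2 = \sum_{i,j} M_{ij}^2 \leq n(n-1)$, because there are exactly $n(n-1)$ off-diagonal positions and each contributes at most $1$.

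The second step is to isolate the contribution of the high-multiplicity eigenvalue $e$. Since $e$ has multiplicity at least $m$, I may assume $\lambda_1 = \cdots = \lambda_m = e$, and I would focus on the remaining eigenvalues $\lambda_{m+1}, \ldots, \lambda_n$. From the trace identity these satisfy $\sum_{i=m+1}^n \lambda_i = -me$, and from the trace-of-square bound they satisfy $\sum_{i=m+1}^n \lambda_i^2 \leq n(n-1) - m e^2$. The key estimate is then Cauchy--Schwarz applied to these $n-m$ values:
\[
(me)^2 = \Bigl( \sum_{i=m+1}^n \lambda_i \Bigr)^2 \leq (n-m) \sum_{i=m+1}^n \lambda_i^2 \leq (n-m)\bigl( n(n-1) - m e^2 \bigr).
\]

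Rearranging this single inequality finishes the argument: expanding gives $m^2 e^2 + m(n-m)e^2 \leq (n-m)n(n-1)$, that is, $mn\, e^2 \leq (n-m)n(n-1)$, and dividing by $mn$ yields $e^2 \leq (n-1)(n-m)/m$, as claimed. I do not anticipate a genuine obstacle here, since the proof is a short combination of standard facts; the only point requiring care is the bookkeeping in the trace-of-square bound (counting off-diagonal positions correctly and noting that the zero diagonal is exactly what forces the trace identity $\sum \lambda_i = 0$ needed for the Cauchy--Schwarz step).
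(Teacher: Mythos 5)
Your proof is correct and follows essentially the same route as the paper: the trace identity $\sum \lambda_i = 0$, the bound $\operatorname{tr}(M^2) \leq n(n-1)$ from the off-diagonal entries, and Cauchy--Schwarz applied to the $n-m$ eigenvalues other than the copies of $e$, followed by the same rearrangement. No gaps.
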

	\begin{proof}
	Let $a_1,a_2, \ldots, a_n$ be the eigenvalues of $M$. 
	We put $a_{n-m+1}=e, a_{n-m+2}=e, \ldots, a_{n}=e$.
	Since the sum of eigenvalues of $M$ is the trace of $M$, and the sum of the squares of the eigenvalues is the trace of $M^2$, we have
	\[
	a_1+a_2+\cdots +a_{n-m}+m e =0,
	\]
	\[
	a_1^2+a_2^2+\cdots +a_{n-m}^2+m e^2 \leq n(n-1). 
	\]
	By the Cauchy-Schwartz inequality, 
	\begin{align*}
		m^2 e^2 &= (\sum_{l=1}^{n-m} a_l)^2 \\
								&\leq (\sum_{l=1}^{n-m} {a_l}^2)(\sum_{l=1}^{n-m} 1^2) \\
								&\leq (n-m) \biggl(n(n-1) - m e^2 \biggr).
	\end{align*}
	This implies $e^2 \leq (n-1)(n-m)/m$.
	\end{proof}
	Let $I$ be the identity matrix. 
	\begin{lemma} \label{integer}
	Let $M$ be a symmetric matrix whose diagonal entries are all $k\in \mathbb{R}$ 
	and non-diagonal entries are integers. 
	Suppose the multiplicity of the zero eigenvalue of $M$ is greater than the maximum multiplicity of
 the non-zero eigenvalues of $M$. 
	Then $k$ is an integer. 
	\end{lemma}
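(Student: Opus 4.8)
The plan is to exploit the fact that subtracting $kI$ from $M$ produces an \emph{integer} matrix, so that the eigenvalues of $M$ are those of an integer matrix shifted by $k$, and then to run a Galois-conjugacy argument on $-k$. First I would write $M = kI + N$, where $N := M - kI$ is a real symmetric matrix with zero diagonal and integer off-diagonal entries; in particular $N$ has integer entries, so its characteristic polynomial $\phi_N(y) = \det(yI - N)$ is monic with integer coefficients, and every eigenvalue of $N$ is an algebraic integer. Since $\phi_M(x) = \phi_N(x - k)$, the eigenvalues of $M$ are exactly $\mu + k$ as $\mu$ ranges over the eigenvalues of $N$, with multiplicities preserved. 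In particular the zero eigenvalue of $M$ corresponds to the eigenvalue $-k$ of $N$, and its multiplicity $m_0$ equals the multiplicity of $-k$ as an eigenvalue of $N$. If $k = 0$ we are done, so I would assume $-k \ne 0$.

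The key step is to show that $-k$ is rational. Let $p(y) \in \mathbb{Q}[y]$ be the minimal polynomial of the algebraic integer $-k$; it is monic, irreducible, and by Gauss's lemma has integer coefficients. Factoring $\phi_N$ over $\mathbb{Q}$ into powers of distinct monic irreducibles shows that all roots of any single irreducible factor occur with the same multiplicity in $\phi_N$ (distinct irreducibles over a field of characteristic $0$ are coprime and separable, so each root of $\phi_N$ lies in exactly one factor and its multiplicity is the exponent of that factor). Hence every root of $p$, i.e.\ every Galois conjugate of $-k$, is an eigenvalue of $N$ of the same multiplicity $m_0$; here I use that the real symmetric matrix $N$ is diagonalizable, so that algebraic and geometric multiplicities agree. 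Now suppose $\deg p > 1$. Then $-k$ has a conjugate $\mu \ne -k$, which yields a non-zero eigenvalue $\mu + k$ of $M$ (non-zero precisely because $\mu \ne -k$) whose multiplicity equals $m_0$. This contradicts the hypothesis that the multiplicity $m_0$ of the zero eigenvalue strictly exceeds the maximum multiplicity among the non-zero eigenvalues. Therefore $\deg p = 1$, so $-k \in \mathbb{Q}$; being also an algebraic integer, $-k \in \mathbb{Z}$, whence $k \in \mathbb{Z}$.

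The main subtlety, and the step I would be most careful to justify, is the claim that all Galois conjugates of $-k$ share the multiplicity $m_0$: this rests on the distinct-irreducible factorization of $\phi_N$ over $\mathbb{Q}$ together with the diagonalizability of $N$, and it is exactly here that the integrality of the off-diagonal entries, forcing $\phi_N \in \mathbb{Z}[y]$, is indispensable. Everything else is routine bookkeeping with the shift $x \mapsto x - k$.
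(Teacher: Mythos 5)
Your proof is correct and takes essentially the same route as the paper's: both write $M=kI+A$ with $A$ an integer symmetric matrix, observe that $-k$ is then a real algebraic-integer eigenvalue of $A$ whose Galois conjugates all appear as eigenvalues of $A$ with (at least) the multiplicity $m_0$ of the zero eigenvalue of $M$, and conclude that an irrational $-k$ would force a non-zero eigenvalue $k+k'$ of $M$ of multiplicity $m_0$, contradicting the hypothesis. Your write-up is marginally more careful (explicit $k=0$ case, exact conjugate multiplicities via the irreducible factorization, explicit appeal to diagonalizability), but the underlying argument is identical.
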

	
	\begin{proof}
	We can write 
	$M =k I + A$,
	where  $A$ is a symmetric matrix whose diagonal entries are $0$ 
and non-diagonal entries are integers. 
Let $m$ be the multiplicity of the zero eigenvalue of $M$. 
Then $-k$ is a non-zero eigenvalue of $A$ of multiplicity $m$. 
Since the diagonal entries of $A$ are $0$ and the non-diagonal entries are integers, 
$-k$ is a real algebraic integer. 
Suppose $-k$ is irrational. 
The characteristic polynomial of $A$ is divisible by $f^m$ where $f$ is the minimal polynomial of $-k$. 
Hence, $-k$ has at least one conjugate $k' \in \mathbb{R}$ as an eigenvalue of $A$ of multiplicity $m$. 
Therefore $k+k'$ is a non-zero eigenvalue of $M$ of multiplicity $m$. This contradicts the fact
 that
$m$ is greater than the maximum multiplicity of the non-zero eigenvalues of $M$. 
Thus $-k$ is a rational algebraic integer, and hence $k$ is an integer.  
	\end{proof}
	
Throughout this paper, we use the function 
\[
U(N):=\left\lfloor \frac{1}{2}+ \sqrt{\frac{N^2}{2N-2}+\frac{1}{4}} \right\rfloor.
\]
The following is a key lemma to prove the main results of this paper. 
	\begin{lemma} \label{key lemma}
	Let $X$ be a finite subset of $\Omega \subset \mathbb{R}^d$, and $\mathcal{P}(\mathbb{R}^d)$ a 
linear subspace of $P_l(\mathbb{R}^d)$. Let $N=\dim \mathcal{P}(\Omega)$. Suppose that there exists 
$F_x \in \mathcal{P}(\Omega)$ for each $x \in X$ such that $F_x(x)=k$ where $k$ is constant, $F_x(y)$ are 
$0$ or $1$ for all $y \in X$ with $y \ne x$, and $F_x(y)=F_y(x)$ for all $x,y \in X$. If $|X| \geq 2 N$, 
then $k$ is an integer, and $|k|\leq U(N)$.
	
	\end{lemma}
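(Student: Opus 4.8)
The plan is to encode the family $\{F_x\}_{x\in X}$ as a single symmetric matrix and read off both the integrality and the bound from its spectrum. Write $X=\{x_1,\dots,x_n\}$ with $n=|X|$, and let $M$ be the $n\times n$ matrix with $(i,j)$-entry $F_{x_i}(x_j)$. The hypothesis $F_x(y)=F_y(x)$ makes $M$ symmetric, its diagonal entries are all $k$, and its off-diagonal entries lie in $\{0,1\}$; thus $M=kI+A$, where $A$ is the adjacency matrix of a graph $G$ on the vertex set $X$. Each $F_{x_i}$ is the restriction to $\Omega$ of some polynomial in $\mathcal{P}(\mathbb{R}^d)$, so Lemma \ref{lem} applies and gives $\mathrm{rank}\,M\le \dim\mathcal{P}(X)\le\dim\mathcal{P}(\Omega)=N$. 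Consequently $0$ is an eigenvalue of $M$ of multiplicity $m_0\ge n-N\ge N$, the last inequality coming from $n\ge 2N$.

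For the integrality of $k$ I would invoke Lemma \ref{integer} applied to $M$: it suffices to check that the multiplicity $m_0$ of the zero eigenvalue strictly exceeds the multiplicity of every non-zero eigenvalue. The non-zero eigenvalues of $M$ account for $n-m_0\le N\le m_0$ of the spectrum, so each has multiplicity at most $n-m_0\le m_0$. When $n>2N$, or when $n=2N$ but $m_0>N$, this inequality is strict and Lemma \ref{integer} immediately yields $k\in\mathbb{Z}$. The one remaining configuration is $n=2N$ with $m_0=N$ and a single non-zero eigenvalue $\mu$ of multiplicity $N$; here I would argue directly. Then $A=M-kI$ has exactly the two eigenvalues $-k$ and $\mu-k$, each of multiplicity $N$, and $\mathrm{tr}\,A=0$ forces $\mu=2k$, whence $A^2$ has the single eigenvalue $k^2$ and $A^2=k^2 I$. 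Reading the off-diagonal entries of $A^2$ shows that no two vertices of $G$ share a common neighbour, so $G$ has maximum degree at most $1$; reading the diagonal shows $G$ is $k^2$-regular, so $k^2\le 1$ and $k\in\{0,\pm1\}\subset\mathbb{Z}$.

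For the bound I would pass to the Seidel matrix $S=J-I-2A$, where $J$ is the all-ones matrix; $S$ is symmetric with zero diagonal and off-diagonal entries $\pm1$, so Lemma \ref{k_bound} is available. Writing $M=(k-\tfrac12)I+\tfrac12 J-\tfrac12 S$, any vector $v$ in the zero-eigenspace of $M$ that is orthogonal to the all-ones vector satisfies $Jv=0$ and hence $Sv=(2k-1)v$. Since the zero-eigenspace has dimension $m_0$, intersecting it with $\mathbf{1}^{\perp}$ produces an eigenspace of $S$ for the eigenvalue $2k-1$ of dimension at least $m_0-1\ge n-N-1$. Applying Lemma \ref{k_bound} with this multiplicity gives $(2k-1)^2\le (n-1)(n-m_0+1)/(m_0-1)$, and bounding the right-hand side using $m_0\ge n-N$ and then maximizing the resulting expression over $n\ge 2N$ (it is decreasing in $n$, so the extremum is at $n=2N$) yields $(2k-1)^2\le (2N-1)(N+1)/(N-1)$. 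A short computation shows the right-hand side equals $4\bigl(N^2/(2N-2)+\tfrac14\bigr)$, so $|k|\le\tfrac12+\sqrt{N^2/(2N-2)+\tfrac14}$; since $k$ is an integer, $|k|\le U(N)$.

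The step I expect to be the crux is the passage to the Seidel matrix: applying Lemma \ref{k_bound} directly to $A$ with the eigenvalue $-k$ gives only $k^2\le (n-1)N/(n-N)\le 2N-1$, which is off by a constant factor and does not match $U(N)$. The gain comes from recentring via $S=J-I-2A$ and discarding the single direction $\mathbf{1}$, which sharpens the multiplicity input to Lemma \ref{k_bound} just enough to recover the stated constant. The secondary subtlety is the boundary configuration $n=2N$, $m_0=N$ with one non-zero eigenvalue, which escapes Lemma \ref{integer} and must be settled separately by the direct $A^2=k^2 I$ argument.
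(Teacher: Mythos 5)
Your proof is correct, and for the upper bound on $|k|$ it is in substance identical to the paper's: both pass from $M=kI+A$ to a symmetric $\pm1$ matrix with zero diagonal ($S=J-I-2A$ for you, $D=2A-(J-I)=-S$ in the paper), extract the eigenvalue $\pm(2k-1)$ with multiplicity at least $|X|-N-1$ from the rank bound on $2M-J$, and feed it to Lemma \ref{k_bound}; the arithmetic at $n=2N$ is the same. Where you genuinely diverge is the integrality step. The paper splits on the structure of the graph underlying $A$: if some component has diameter at least $2$, it cites a graph-theoretic fact (Godsil--Royle) that $A$ then has at least three distinct eigenvalues, which forces every nonzero eigenvalue of $M$ to have multiplicity at most $N-1<N$ so that Lemma \ref{integer} applies; otherwise all components are complete and the eigenvalues of $A$ are integers outright. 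You instead do a pure multiplicity count showing Lemma \ref{integer} applies except in the single boundary configuration $n=2N$, $m_0=N$ with one nonzero eigenvalue of multiplicity $N$, which you dispatch by the trace identity $\mu=2k$ and the computation $A^2=k^2I$ (no common neighbours, hence maximum degree at most $1$, hence $k\in\{0,\pm1\}$). Your route is self-contained and avoids the external spectral graph theory input; the paper's route avoids the explicit boundary-case analysis. Both arguments are complete and yield the same conclusion.
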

	\begin{proof}
	Let $M$ be the symmetric matrix $(F_x(y))_{x,y \in X}$. 
By Lemma \ref{lem}, the rank of $M$ is at most $N$. 
Since $|X| \geq 2N$, the multiplicity of the zero eigenvalue of $M$ is at least $N$. 
We can express
	\begin{equation} \label{eq}
	M =k I + A,
	\end{equation}
	where $A$ is a symmetric $(0,1)$-matrix whose diagonal entries are $0$. 
The matrix $A$ is regarded as the 
adjacency matrix of a simple graph 
({\it i.e.} the matrix with rows and columns indexed by the vertices, such that $(u,v)$-entry 
is equal to $1$ if $u$ is adjacent to $v$, and other entries are zero). 

If there exists a connected 
component whose  diameter ({\it i.e.} longest shortest path) is at least $2$, then $A$ has at least $3$ 
distinct eigenvalues \cite[Lemma 8.12.1, page 186]{Godsil-Royle}. Then the multiplicity of a nonzero 
eigenvalue of $M$ is at most $N-1$. By Lemma \ref{integer}, $k$ is an integer.  

If every connected 
component is of diameter $1$ or an isolated vertex, then the eigenvalues of $A$ are integers. 
Since $-k$ is an eigenvalue of $A$, $k$ is an integer.

	By (\ref{eq}), we have the equality 
		\[
		2 A-(J-I)=2  M-J-(2 k-1) I,
		\]
	where $J$ is the matrix of all ones. Let $D=2 A-(J-I)$. The diagonal entries of $D$ are $0$, and its 
non-diagonal entries are $1$ or $-1$. Since the rank of $2  M-J$ is at most $N+1$, the multiplicity of the 
zero eigenvalue of $2  M-J$ is at least $|X|-N-1$. Therefore $-2 k+1$ is an eigenvalue of $D$ of 
multiplicity at least $|X|-N-1$. 
	By Lemma \ref{k_bound}, 
		$(2 k-1) ^2 \leq (N+1)(|X|-1)/(|X|-N-1)$. 
	Since $|X| \geq 2N$, we have 
		\begin{align*}
		\frac{(N+1)(|X|-1)}{|X|-N-1} & = N+1+\frac{N(N+1)}{|X|-N-1} \\
		& \leq \frac{2N^2}{N-1}+1. 
		\end{align*}
	This implies the second statement.
	\end{proof}
\section{Euclidean case}
We define $||\xi||=\sqrt{\sum_{i=1}^d \xi_i^2}$ where $\xi=(\xi_1,\xi_2,\ldots \xi_d) \in \mathbb{R}^d$.
The following is the main theorem of the present paper. 
	\begin{theorem} \label{main}
	Let $X$ be an $s$-distance set in $\mathbb{R}^d$ with $s \geq 2$, and $A(X)=\{\alpha_1, \alpha_2, \ldots, \alpha_s \}$. Let $N=\binom{d+s-1}{s-1}+\binom{d+s-2}{s-2}$.
	If $|X| \geq 2 N$, then  
\[
\prod_{j=1,2,\ldots,s, j \ne i} \frac{\alpha_j^2}{\alpha_j^2-\alpha_i^2}
\] 
is an integer $k_i$ for each $i=1,2,\ldots,s$. Furthermore $|k_i| \leq U(N)$.
	\end{theorem}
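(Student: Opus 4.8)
The plan is to deduce the statement directly from the key lemma (Lemma \ref{key lemma}) by exhibiting, for each fixed index $i$, a family of polynomials $\{F_x\}_{x\in X}$ satisfying its hypotheses with the constant $k$ equal to the claimed product $k_i$. Thus the whole argument reduces to two tasks: writing down an explicit $F_x$ that takes the value $k_i$ at $x$, takes values in $\{0,1\}$ at the remaining points of $X$, and is symmetric; and then identifying a linear subspace $\mathcal{P}(\mathbb{R}^d)\subseteq P_l(\mathbb{R}^d)$ of the correct dimension $N$ that contains every $F_x$.

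For the construction I would use a Lagrange-type interpolation in the squared distance. Writing $u=\|\xi-x\|^2$, set
\[
f_i(u)=\prod_{j=1,2,\ldots,s,\, j\ne i}\frac{\alpha_j^2-u}{\alpha_j^2-\alpha_i^2},
\]
a polynomial of degree $s-1$ in $u$ (well defined since the $\alpha_j$ are distinct, so the denominators are nonzero), and define $F_x(\xi)=f_i(\|\xi-x\|^2)$. Since $\|y-x\|^2\in\{\alpha_1^2,\ldots,\alpha_s^2\}$ for every $y\in X$ with $y\ne x$, the factor structure gives $f_i(\alpha_i^2)=1$ and $f_i(\alpha_m^2)=0$ for $m\ne i$, so $F_x(y)\in\{0,1\}$; symmetry $F_x(y)=F_y(x)$ is immediate because $f_i$ depends only on $\|x-y\|^2$. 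Finally $F_x(x)=f_i(0)=\prod_{j\ne i}\alpha_j^2/(\alpha_j^2-\alpha_i^2)=k_i$, a value independent of $x$, which is the constant required by the lemma.

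The crux, and the step I expect to carry the real content, is verifying that each $F_x$ lies in a space of dimension exactly $N=\binom{d+s-1}{s-1}+\binom{d+s-2}{s-2}$ rather than in the much larger $P_{2(s-1)}(\mathbb{R}^d)$. Here I would take $\mathcal{P}(\mathbb{R}^d)=W_{s-1}(\mathbb{R}^d)$, whose dimension is precisely $\binom{d+(s-1)}{s-1}+\binom{d+(s-1)-1}{(s-1)-1}=N$ by the formula $\dim W_l(\mathbb{R}^d)=\binom{d+l}{l}+\binom{d+l-1}{l-1}$ with $l=s-1$. Expanding $\|\xi-x\|^2=\xi_0-2\langle\xi,x\rangle+\|x\|^2$, where $\xi_0=\|\xi\|^2$, one sees that each power $u^m$ with $m\le s-1$ is a sum of monomials in which each of the $m$ factors contributes at most one of $\xi_0,\xi_1,\ldots,\xi_d$; hence $u^m\in W_m(\mathbb{R}^d)\subseteq W_{s-1}(\mathbb{R}^d)$, and since the degree of $f_i$ in $u$ is $s-1$ it follows that $F_x\in W_{s-1}(\mathbb{R}^d)\subseteq P_{2(s-1)}(\mathbb{R}^d)$. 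This is exactly where the weighting by $\xi_0$ is essential: a single factor $\|\xi-x\|^2$ of degree $2$ costs only one unit of the $W$-degree, which is what keeps the dimension at $N$ instead of blowing up.

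With these pieces in place the conclusion is immediate. Taking $\Omega=\mathbb{R}^d$ and $N=\dim W_{s-1}(\mathbb{R}^d)$, the hypothesis $|X|\ge 2N$ lets Lemma \ref{key lemma} apply, yielding that $k_i$ is an integer with $|k_i|\le U(N)$. Running this argument for each $i=1,2,\ldots,s$ gives the theorem; everything beyond the identification of the space $W_{s-1}(\mathbb{R}^d)$ and the interpolation polynomial $f_i$ is routine bookkeeping.
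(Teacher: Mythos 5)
Your proposal is correct and follows essentially the same route as the paper: the same Lagrange-type polynomial $F_x(\xi)=\prod_{j\ne i}(\alpha_j^2-\|x-\xi\|^2)/(\alpha_j^2-\alpha_i^2)$, placed in $W_{s-1}(\mathbb{R}^d)$ of dimension $N$, followed by an application of Lemma \ref{key lemma}. Your verification that $F_x\in W_{s-1}(\mathbb{R}^d)$ is simply a more detailed spelling-out of what the paper asserts in one line.
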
  
	\begin{proof}
	We fix $i \in \{1,2,\ldots, s \}$. For each $x \in X$, we define the polynomial 
		\[
		F_x (\xi)=  \prod_{j=1,2,\ldots,s, j \ne i} \frac{\alpha_j^2-||x-\xi||^2  }{\alpha_j^2-\alpha_i^2}.
		\]
	Then, $F_x \in W_{s-1}(\mathbb{R}^d)$, $F_x(x)=\prod_{j \ne i} {\alpha_j}^2/({\alpha_j}^2-{\alpha_i}^2)$, $F_x(y)=1$ if $d(x,y)=\alpha_i$, $F_x(y)=0$ if $d(x,y) \ne \alpha_i$, and $F_x(y)=F_y(x)$ for all $x,y\in X$. By Lemma \ref{key lemma}, the theorem follows. 
	\end{proof} 
	\begin{remark}
For $s=2$ the upper bound for $|k_i|$ in Theorem \ref{main} is worse than that in the original theorem of Larman--Rogers--Seidel. 
\end{remark}
\begin{remark}
If the dimension of the linear space spanned by $\{F_x\}_{x \in X}$ is 
smaller than $\dim W_{s-1}(\mathbb{R}^d)$, then we can make the condition of $|X|$ be stronger. 
\end{remark}
	
	\begin{example} \label{exam}
	Let $X_{d,s}$ be the set of all vectors those are of length $d+1$, 
and have exactly $s$ entries of $1$ and $d+1-s$ entries of $0$. For any $x \in X_{d,s}$, the usual inner product of $x$ and the vector of 
all ones is equal to $s$. 
If $s\leq (d+1)/2$, the set $X_{d,s}$ can be regarded as 
a $\binom{d+1}{s}$--point $s$-distance set in $\mathbb{R}^d$. 
The inequality $\binom{d+1}{s} \geq 2 N$ holds for every sufficiently large $d$. 
For instance, for $s=3$ and $d \geq 10$, the inequality holds. 
	\end{example}
\section{The number of Euclidean $s$-distance sets} 
	In this section, we prove that there are finitely many $s$-distance sets $X$ in $\mathbb{R}^d$ 
with $|X|\geq 2\binom{d+s-1}{s-1}+2\binom{d+s-2}{s-2}$. 

Let $D:=\{(t_1,t_2,\ldots,t_{s-1}) \in \mathbb{R}^{s-1} \mid 0<t_1<t_2<\cdots < t_{s-1} < 1 \}$. 
For each $i \in \{1,2,\ldots,s\}$, $K_i$ is the function from $D$ to $\mathbb{R}$ 
defined by
\[
K_i(t_1,t_2,\ldots,t_{s-1}):= \prod_{j=1,2,\ldots, s, j \ne i} \frac{t_j}{t_j-t_i}, 
\]
where $t_s=1$. 
It is easy to prove that the equality $\sum_{i=1}^s K_i=1$ holds.
		Let $F$ be a function from $D$ 
to $(K_1,K_2,\ldots, K_{s-1})$. 
If the Jacobian of any point in $D$ is not equal to zero, then 
there exists the inverse function $F^{-1}$ by the inverse function theorem. 
This means that we can uniquely determine the distances $\alpha_i$ for given integers $k_i$ in Theorem \ref{main}.
	
	\begin{lemma} \label{jacob}
	Let $F(t_1,t_2,\ldots, t_{s-1}) = (K_1,K_2, \ldots, K_{s-1})$.
	Let $J$ be the Jacobian matrix of $F$. Then 
	\[
	\det(J)= (s-1)! \prod_{i=1}^{s-1} \frac{K_i}{t_s-t_i}.
	\] 
	\end{lemma}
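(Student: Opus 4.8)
The plan is to write out the Jacobian entries by logarithmic differentiation, extract a common factor from each row, and thereby reduce the claim to one clean determinant evaluation. Starting from $\log K_i=\sum_{j\neq i}\bigl(\log t_j-\log(t_j-t_i)\bigr)$ (the sum over $j\in\{1,\dots,s\}\setminus\{i\}$, with $t_s=1$), differentiation gives
\[
\frac{\partial K_i}{\partial t_i}=K_i\sum_{j\neq i}\frac{1}{t_j-t_i},\qquad
\frac{\partial K_i}{\partial t_k}=K_i\Bigl(\frac{1}{t_k}-\frac{1}{t_k-t_i}\Bigr)\quad(k\neq i).
\]
Every entry of the $i$-th row of $J$ carries the factor $K_i$, so $\det J=\bigl(\prod_{i=1}^{s-1}K_i\bigr)\det\widetilde J$, where $\widetilde J$ has diagonal entries $\sum_{j\neq i}1/(t_j-t_i)$ and off-diagonal entries $1/t_k-1/(t_k-t_i)$. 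Comparing with the asserted formula, the lemma is equivalent to the reduced identity
\[
\det\widetilde J=\frac{(s-1)!}{\prod_{i=1}^{s-1}(t_s-t_i)}.
\]

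I would prove this reduced identity in its homogeneous form, keeping $t_s$ a free variable and specializing $t_s=1$ only at the end; since each $K_i$ is homogeneous of degree $0$, the entries of $\widetilde J$ have degree $-1$ and both sides have degree $-(s-1)$, which is a useful consistency check. The evaluation itself I expect to carry out by induction on $s$: the base case $s=2$ is immediate, and the inductive step proceeds by cofactor expansion after elementary column operations. The central difficulty is that $\widetilde J$ is not a bare Cauchy matrix — its off-diagonal entries are of Cauchy type $-1/(t_k-t_i)$, but the diagonal carries the sums $\sum_{j\neq i}1/(t_j-t_i)$, and these diagonal sums are what must be removed before any telescoping can occur. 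The key tool for this is the Euler relation coming from the degree-$0$ homogeneity of $K_i$, namely $\sum_{k=1}^{s-1}\widetilde J_{ik}\,t_k=t_i/(t_s-t_i)$; performing the column operation that replaces a column by the $t_k$-weighted combination of all columns converts it into the explicit vector $\bigl(t_i/(t_s-t_i)\bigr)_i$ while scaling the determinant by a single $t_k$, clearing the diagonal sums from that column and exposing the passage to a determinant in $s-1$ variables. The main work, and the step I expect to be delicate, is organizing the expansion so that the identity $1/t_k-1/(t_k-t_i)=-t_i/\bigl(t_k(t_k-t_i)\bigr)$ collapses the surviving minors into the single product $\prod_i(t_s-t_i)^{-1}$, with the factor $(s-1)!$ emerging from the $s-1$ independent choices made along the way.

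As an alternative viewpoint that at least explains the shape of the answer, I would note that the full $s\times s$ matrix $L=(\partial\log K_i/\partial t_k)_{i,k=1}^{s}$ is singular with right null vector $(t_1,\dots,t_s)$, and, by differentiating the identity $\sum_i K_i=1$, left null vector $(K_1,\dots,K_s)$. Since $\widetilde J$ is exactly the principal submatrix of $L$ obtained by deleting the last row and column, $\det\widetilde J$ is the $(s,s)$ cofactor of $L$, and the rank-one structure of $\mathrm{adj}\,L$ forces $\det\widetilde J$ to be a scalar multiple of $t_sK_s$. This accounts for the product form of the answer, but because the two null vectors are orthogonal ($\sum_i K_it_i=0$), the scalar is not given by any simple eigenvalue ratio and must still be pinned down by essentially the computation above; so this remark organizes the result rather than shortcutting it. Either way the crux is the same determinant bookkeeping, which I would close by checking the base case and a single inductive step.
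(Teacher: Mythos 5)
Your reduction is correct and coincides with the first step of the paper's proof: every entry of row $i$ of $J$ carries the factor $K_i$ (your logarithmic differentiation reproduces the paper's formulas exactly, since $1/t_k-1/(t_k-t_i)=(t_i/t_k)\cdot 1/(t_i-t_k)$), so the lemma reduces to the identity $\det\widetilde J=(s-1)!/\prod_{i=1}^{s-1}(t_s-t_i)$, and the Euler relation $\sum_k\widetilde J_{ik}t_k=t_i/(t_s-t_i)$ you derive from degree-$0$ homogeneity is also correct. But the proof stops there: the evaluation of $\det\widetilde J$, which is the entire content of the lemma, is never carried out. You describe it as an induction on $s$ whose inductive step you ``expect to be delicate,'' with the factor $(s-1)!$ ``emerging from the $s-1$ independent choices made along the way'' --- that is a plan, not an argument. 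Moreover, the induction as described does not obviously close: after the weighted column operation and a cofactor expansion, the surviving minors are not matrices of the form $\widetilde J$ for a smaller $s$, because the diagonal entries $\sum_{j\ne i}1/(t_j-t_i)$ still refer to the deleted indices, so there is no inductive hypothesis to invoke without further, unspecified manipulations. Your second paragraph, as you yourself note, only explains the shape of the answer; and the ``scalar'' relating $\det\widetilde J$ to $t_sK_s$ via the rank-one structure of $\mathrm{adj}\,L$ depends on $t$, so it does not by itself yield the product form.

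For comparison, the paper closes exactly this gap by a polynomial-identity argument rather than a recursion: it writes $\det(M')$ (where $M'$ is your $\widetilde J$ up to conjugation by $\diag(t_1,\dots,t_{s-1})$) over the common denominator $\prod_{j}(t_s-t_j)\prod_{i<j<s}(t_i-t_j)^2$, bounds the degree of the numerator $P$ by $(s-1)(s-2)$, shows by a column manipulation that $P$ vanishes to order $2$ on each hyperplane $t_i=t_j$ with $i<j<s$, concludes $P=c\prod_{i<j<s}(t_i-t_j)^2$ with $c$ a constant, and finally computes $c=(s-1)!$ by letting $t_1,\dots,t_{s-1}$ tend to $t_s$ successively, which triangularizes the determinant. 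Some such global argument --- or a genuinely completed recursion with a well-defined inductive hypothesis --- is required; as written, your proposal establishes only the easy normalization and leaves the determinant identity unproved.
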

	\begin{proof}
	Let $m_{i,j}=1/(t_i-t_j)$. 
	By direct calculations, we have 
	\[
	\frac{\partial{K_i}}{\partial t_j}= \begin{cases}
	\sum_{k=1,\ldots,s, k \ne i} m_{k,i} K_i \text{ if $i=j$,} \\
	\frac{t_i}{t_j}m_{i,j} K_i \text{ if $i \ne j$.}
	\end{cases}
	\]
	Therefore $\det(J)=(\prod_{i=1}^{s-1} K_i) \det(M) $, where $M(i,j)=\sum_{k=1,\ldots,s, k \ne i}m_{k,i}$ if $i=j$,
 and  $M(i,j)=t_i m_{i,j}/t_j$ if $i \ne j$. 
	Note that 
	\[
	M= \diag (t_1,t_2, \ldots ,t_{s-1}) M' \diag(t_1,t_2,\ldots ,t_{s-1})^{-1}
	\]
	where $M'(i,j)=\sum_{k=1,\ldots,s, k \ne i}m_{k,i}$ if $i=j$, 
and  $M'(i,j)=m_{i,j}$ if $i \ne j$. Thus, $\det(M)=\det(M')$. 
Using the common denominator for all terms, we can write 
\[
\det(M')= \frac{P(t_1,\ldots,t_{s-1})}{\prod_{j=1}^{s-1}(t_s-t_j) \prod_{i<j<s}(t_i-t_j)^2}.
\]
 where $P(t_1,\ldots,t_{s-1})$ is a polynomial of degree at most $(s-1)(s-2)$. 
 
 Fix $i$ and $j$ with $i\ne j$. Adding each column except for the column $i$ of $M'$ to the column $i$, the column $i$ changes to 
$(m_{s,1},m_{s,2},\ldots,m_{s,s-1})$. Multiplying the column $j$ of $M'$ by $(t_i-t_j)$,

\begin{multline} \label{det}
(t_i-t_j) \det({}^t M') \\ =\det \bordermatrix{
 &                         &                        &        &  i    &        &  j    &       &           \cr
 &                         &                        &        &\vdots &        &\vdots &       &           \cr
i&m_{s,1}                  &m_{s,2}	                & \cdots &m_{s,i}& \cdots &m_{s,j}&\cdots & m_{s,s-1}  \cr
 &\vdots                   & \vdots                 &        &\vdots &        &\vdots &       &  \vdots   \cr
j&\frac{m_{1,j}}{m_{i,j}}   & \frac{m_{2,j}}{m_{i,j}} & \cdots &   1   & \cdots & 1 +\sum_{k=1,\ldots,s,k\ne i,j} \frac{m_{k,j}}{m_{i,j}}  & \cdots & \frac{m_{s-1,j}}{m_{i,j}}  \cr
 &                         &                        &        &\vdots &        &\vdots &       &           \cr
},
\end{multline}
where other entries of the right hand side of (\ref{det}) are the same as those of ${}^tM'$. 
When $t_i=t_j$, the right hand side of (\ref{det}) is defined, and the column $i$ coincides with the column $j$. 
Thus, when $t_i=t_j$, we have $(t_i-t_j) \det(M')=0$. 
This means that $P(t_1,\ldots,t_{s-1})$ has the factor $(t_i-t_j)^2$ for any $i$ and $j$ with $i < j<s$. 
Since the degree of $P(t_1,\ldots,t_{s-1})$ is at most $(s-1)(s-2)$, we have 
$P(t_1,\ldots,t_{s-1}) = c \prod_{i<j<s} (t_i-t_j)^2$,   
 where $c$ is constant.  Thus, $\det(M')=c \prod_{j=1}^{s-1}m_{s,j}$ and hence $\prod_{j=1}^{s-1}(t_s-t_j) \det(M')=c$. 
Multiplying the column $i$ of $M'$ by $t_s-t_i$, we have 
\begin{align*}
&\prod_{j=1}^{s-1}(t_s-t_j) \det(M') \\
&= 
\left|
\begin{array}{cccc}
1+\sum_{k=2,\ldots,s-1}\frac{m_{k,1}}{m_{s,1}} & \frac{m_{1,2}}{m_{s,2}} & \cdots & \frac{m_{1,s-1}}{m_{s,s-1}}  \\
\frac{m_{2,1}}{m_{s,1}} & 1+\sum_{k=1,\ldots,s-1, k\ne 2}\frac{m_{k,2}}{m_{s,2}} & \cdots & \frac{m_{2,s-1}}{m_{s,s-1}} \\
\vdots & \vdots & & \vdots \\
\frac{m_{s-1,1}}{m_{s,1}}& \frac{m_{s-1,2}}{m_{s,2}} & \cdots & 1+\sum_{k=1,\ldots,s-2}\frac{m_{k,s-1}}{m_{s,s-1}}
\end{array}
\right| \\
& \rightarrow
\left|
\begin{array}{cccc}
1& 1 & \cdots & 1  \\
0 & 2+\sum_{k=3,\ldots,s-1}\frac{m_{k,2}}{m_{s,2}} & \cdots & \frac{m_{2,s-1}}{m_{s,s-1}} \\
\vdots & \vdots & & \vdots \\
0& \frac{m_{s-1,2}}{m_{s,2}} & \cdots & 2+\sum_{k=2,\ldots,s-2}\frac{m_{k,s-1}}{m_{s,s-1}}
\end{array}
\right| \text{ as $t_1 \rightarrow t_s$} \\
& \rightarrow
\left|
\begin{array}{ccccc}
1      &   1   & 1                                             & \cdots & 1  \\
0      &   2   & 1                                             & \cdots & 1 \\
0      &   0   & 3+\sum_{k=4,\ldots,s-1}\frac{m_{k,3}}{m_{s,3}}& \cdots & \frac{m_{3,s-1}}{m_{s,s-1}} \\
\vdots &\vdots & \vdots                                        &        & \vdots \\
0      &   0   & \frac{m_{s-1,3}}{m_{s,3}}                     & \cdots & 3+\sum_{k=3,\ldots,s-2}\frac{m_{k,s-1}}{m_{s,s-1}}
\end{array}
\right|  \text{ as $t_2 \rightarrow t_s$} \\
& \vdots \\
& \rightarrow
\left|
\begin{array}{ccccc}
1& 1 & 1&\cdots & 1 \\
0 & 2 & 1 &\cdots & 1 \\
\vdots &0 &3 &\cdots & 1 \\
\vdots & \vdots & \vdots& & \vdots \\
0 & 0 & 0 & \cdots & s-1
\end{array}
\right| \text{ as $t_{s-1} \rightarrow t_s$} \\
& = (s-1)!
\end{align*}
Therefore, we have $c=(s-1)!$, and hence this lemma follows. 
	\end{proof}
	
	\begin{lemma} \label{dis_uni}
	Let $X$ be an $s$-distance set with  $|X|\geq 2\binom{d+s-1}{s-1}+2\binom{d+s-2}{s-2}$, and 
$A(X)=\{ \alpha_1, \alpha_2, \ldots, \alpha_s=1 \}$. Suppose $k_i$ are the ratios in Theorem \ref{main}. 
 Then the distances $\alpha_i$ are uniquely determined from given integers $k_i$. 
	\end{lemma}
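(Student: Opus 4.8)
The plan is to recast the statement as the global injectivity of the map $F$ on the domain $D$. Ordering the distances so that $0<\alpha_1<\cdots<\alpha_s=1$ and putting $t_i=\alpha_i^2$, the point $(t_1,\ldots,t_{s-1})$ lies in $D$, we have $t_s=1$, and $K_i(t_1,\ldots,t_{s-1})=k_i$ for $i=1,\ldots,s-1$. Since $\sum_{i=1}^s K_i=1$, the value $k_s$ carries no extra information, so it suffices to show that $F=(K_1,\ldots,K_{s-1})\colon D\to\mathbb{R}^{s-1}$ is injective; then $(t_1,\ldots,t_{s-1})$, and hence the distances, are recovered uniquely from $(k_1,\ldots,k_{s-1})$.

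First I would record that the Jacobian never vanishes on $D$. On $D$ we have $t_s-t_i>0$ for every $i\le s-1$, and in the product defining $K_i$ precisely the $i-1$ factors with index $j<i$ have a negative denominator; hence $K_i$ has the constant sign $(-1)^{i-1}$, and in particular $K_i\ne 0$. Substituting this into the formula of Lemma~\ref{jacob},
\[
\det(J)=(s-1)!\prod_{i=1}^{s-1}\frac{K_i}{t_s-t_i},
\]
shows that $\det(J)$ is nowhere zero, with the constant sign $(-1)^{(s-1)(s-2)/2}$ throughout $D$. By the inverse function theorem $F$ is a local diffeomorphism, so every fibre $F^{-1}(k_1,\ldots,k_{s-1})$ is discrete; this already yields local uniqueness of the distances.

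The main obstacle is to promote this to \emph{global} injectivity, since a nowhere-vanishing Jacobian does not by itself forbid two distinct points of $D$ from sharing an image. I would close the gap by a properness argument: $F$ is an open map onto the open connected set $F(D)$, and if $F\colon D\to F(D)$ is proper then it is a covering map with finitely many sheets; as $D$ is connected, injectivity follows once the sheet number is pinned to one (for instance by exhibiting a point of $F(D)$ with a single preimage). Properness is tested face by face on $\partial D$. Across each collision face $t_i=t_{i+1}$ with $1\le i\le s-2$, and across $t_{s-1}=1$, the coordinate $K_i$, respectively $K_{s-1}$, contains a factor with vanishing denominator and bounded nonzero numerator, so $\|F\|\to\infty$ and these faces cause no difficulty.

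The delicate face is $t_1=0$, where instead $K_1\to 1$ and $K_i\to 0$ for $i\ge 2$, so $F$ tends to the finite point $(1,0,\ldots,0)$ rather than to infinity, and as the remaining coordinates vary along this face the limits trace out the image of the analogous map in the variables $t_2,\ldots,t_{s-1}$. I expect this face to be the technical heart of the proof. The plan is to prove, by induction on $s$ and identifying the $t_1=0$ face with the $(s-1)$-distance instance of the same problem, that the entire limit set of $F$ along this face is disjoint from the open image $F(D)$; then preimages of compact subsets of $F(D)$ stay compact in $D$, properness holds, and injectivity follows. Should the covering argument prove awkward, the same inductive reduction on the $t_1=0$ face should instead yield a direct verification that $F$ separates points.
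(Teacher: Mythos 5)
Your first half reproduces exactly what the paper does: the paper's entire proof of this lemma is the observation that, by Lemma~\ref{jacob}, $\det(J)=(s-1)!\prod_{i=1}^{s-1}K_i/(t_s-t_i)$, that each $K_i$ is nonzero on $D$ (you correctly pin down its sign as $(-1)^{i-1}$), and hence that the inverse function theorem applies. You are also right to flag that this only yields \emph{local} invertibility, so the paper's own argument, read literally, establishes only local uniqueness of the distances; identifying that gap is a genuine contribution of your write-up.

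However, your proposed repair is a plan rather than a proof, and it is incomplete at precisely the points where the difficulty lies. The properness of $F\colon D\to F(D)$ is not actually established: your face-by-face analysis treats each degeneration in isolation, but boundary points of $D$ can realize several degenerations simultaneously (for instance $t_1\to 0$ while $t_2\to t_3$, in which case $K_2$ contains one factor tending to $0$ and another tending to $\infty$, so its limit is indeterminate and $\|F\|$ need not blow up). The key claim that the limit set of $F$ along the face $t_1=0$ is disjoint from the open image $F(D)$ is asserted as an induction hypothesis but never proved, and even granting properness, the covering-map argument still requires exhibiting a fibre of cardinality one, which you defer (``for instance by exhibiting a point of $F(D)$ with a single preimage'') without carrying it out. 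So as written the proposal does not close the global-injectivity gap; it only names it and sketches a plausible strategy. To match what the paper claims you would either have to complete the properness and sheet-counting steps, or settle for the local statement that the paper's one-line proof actually supports.
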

	\begin{proof}
	This lemma is straightforward from Lemma \ref{jacob}. 
	\end{proof}
	\begin{example}
For $s=3$, 
	\begin{align*}
	\alpha_1^2&= \frac{k_1(-1+k_1 +k_2) +\sqrt{k_1 k_2 (-1+k_1+k_2)}}{k_1 (k_1+k_2)},\\
	\alpha_2^2&=\frac{k_2(-1+k_1 +k_2)+\sqrt{k_1 k_2 (-1+k_1+k_2)}}{k_2 (k_1+k_2)}, 
	\end{align*}
where $k_1>0$ and $k_2<0$. 

\end{example}
	\begin{theorem} \label{finite_thm}
	There are finitely many $s$-distance sets $X$ in $\mathbb{R}^d$ 
with $|X|\geq 2\binom{d+s-1}{s-1}+2\binom{d+s-2}{s-2}$.
	\end{theorem}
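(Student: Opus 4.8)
The plan is to show that the hypothesis forces the distance set of $X$ into a finite list, and then to appeal to the classical fact that a point configuration is determined up to congruence by its matrix of pairwise distances. Throughout, the count is understood up to isometry once the scale is fixed by $\alpha_s=1$; without such a normalization, dilations already produce a continuum, as the $(d+1)$-point family of \cite{Einhorn-Schoenberg} shows. I would first note that the assumed bound is exactly $|X|\geq 2N$ for $N=\binom{d+s-1}{s-1}+\binom{d+s-2}{s-2}$, so Theorem \ref{main} applies and produces, for each $i$, an integer $k_i$ with $|k_i|\leq U(N)$. Hence the tuple $(k_1,\ldots,k_s)$ lies in the finite set $\{-U(N),\ldots,U(N)\}^s$. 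Normalizing $\alpha_s=1$, Lemma \ref{dis_uni} recovers $(\alpha_1,\ldots,\alpha_{s-1})$ uniquely from $(k_1,\ldots,k_s)$, so only finitely many normalized distance sets $A(X)$ can occur.

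Next I would bound the number of configurations realizing a fixed distance set. The absolute bound $|X|\leq\binom{d+s}{s}$ of Bannai--Bannai--Stanton \cite{Bannai-Bannai-Stanton} shows that $n:=|X|$ takes finitely many values. For fixed $n$ and fixed normalized distances, a configuration is encoded by the symmetric labeling of the $\binom{n}{2}$ unordered pairs by indices in $\{1,\ldots,s\}$ recording which distance each pair realizes; there are at most $s^{\binom{n}{2}}$ such labelings. Given a labeling together with the numerical values $\alpha_1,\ldots,\alpha_s$, the full matrix of squared distances is determined, and therefore so is the Gram matrix of the vectors $x-x_0$ for a fixed base point $x_0\in X$, via the polarization identity $\langle x-x_0,\,y-x_0\rangle=\tfrac12(\|x-x_0\|^2+\|y-x_0\|^2-\|x-y\|^2)$. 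Since a point set is determined up to an orthogonal transformation by its Gram matrix, each admissible triple (cardinality, distance set, labeling) yields at most one congruence class. Multiplying these finite quantities gives finiteness up to isometry.

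The main obstacle is the rigidity step, although it is classical: the content is precisely that two realizations of the same labeled distance matrix are necessarily congruent, so that the continuous freedom is absorbed entirely by the isometry group once the scale is pinned down by $\alpha_s=1$. A minor subtlety is that not every labeling is geometrically realizable in $\mathbb{R}^d$, but this is harmless, since unrealizable labelings contribute nothing and only an upper bound on the count is required. I would therefore not attempt to characterize the realizable labelings, and would let the crude overcount $s^{\binom{n}{2}}$ stand, the essential input being the reduction, through Theorem \ref{main} and Lemma \ref{dis_uni}, to finitely many distance sets.
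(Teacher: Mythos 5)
Your proposal is correct and follows essentially the same route as the paper: Theorem \ref{main} bounds the $k_i$ to a finite set, Lemma \ref{dis_uni} recovers the (normalized) distances, and then only finitely many distance matrices remain, each determining its realization up to congruence. The only differences are that you make explicit two points the paper leaves implicit or cites --- the absolute bound on $|X|$ needed to bound the number of labelings, and the Gram-matrix rigidity argument (the paper cites Neumaier for the latter) --- which is a reasonable elaboration rather than a different approach.
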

	
	\begin{proof}
	By Theorem \ref{main}, we have finitely many pairs of integers $k_1,k_2,\ldots, k_{s-1}$. 
	Therefore, we have finitely many possible pairs of distances $\alpha_1,\alpha_2,\ldots,\alpha_{s-1}$ by Lemma \ref{dis_uni}. 
	For the finitely many pairs of distances, we can make finitely many distance matrices. 
	If there exists $X \subset \mathbb{R}^d$ such that $C=\{ d(x,y)^2\}_{x, y \in X}$ for a given distance matrix $C$, 
	then the finite set is unique up to congruences \cite{Neumaier}. 
	Therefore this theorem follows. 
	\end{proof}
	
	\begin{remark}
	Einhorn and Schoenberg \cite{Einhorn-Schoenberg} proved that there are finitely many two-distance
sets $X$ in $\mathbb{R}^d$ with $|X|\geq d+2$.  
	The condition $|X| \geq d+2$ is best possible because there are infinitely many $(d+1)$--point two-distance sets. 
	The inequality for $|X|$ in Theorem \ref{finite_thm} is not sharp even for $s=2$, hence it must be improved. 
	\end{remark}

\section{Spherical case}
In this section, we discuss $s$-distance sets on the unit sphere $S^{d-1}$. 
Let $B(X):=\{(x,y) \mid x,y \in X, x \ne y \}$, where $(,)$ means the usual inner product in $\mathbb{R}^d$. 

	The following is a generalization of Larman--Rogers--Seidel's theorem for spherical $s$-distance sets. 
	The condition of $|X|$ is stronger than that in Theorem \ref{main}. 
	\begin{theorem} \label{sphere}
	Let $X$ be an $s$-distance set in $S^{d-1}$ with $s \geq 2$, and $B(X)=\{\beta_1, \beta_2, \ldots, \beta_s \}$. Let 
$N=\binom{d+s-2}{s-1}+\binom{d+s-3}{s-2}$.
	If $|X| \geq 2 N$, then 
\[
\prod_{j=1,2,\ldots,s, j \ne i} \frac{1-\beta_j}{\beta_i-\beta_j}
\] is an integer $k_i$  for each $i=1,2,\ldots, s$. 
Furthermore $|k_i|\leq U(N)$.  
	\end{theorem}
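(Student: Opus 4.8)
The plan is to imitate the proof of Theorem \ref{main} almost verbatim, with squared distances replaced by inner products and with the crucial observation that $(x,\xi)$ is \emph{linear} in $\xi$. Fix $i \in \{1,2,\ldots,s\}$, and for each $x \in X$ set
\[
F_x(\xi) = \prod_{j=1,2,\ldots,s,\, j \ne i} \frac{(x,\xi) - \beta_j}{\beta_i - \beta_j}.
\]
Because $(x,\xi) = \sum_{k=1}^d x_k \xi_k$ is homogeneous of degree one in $\xi$, each factor has degree at most one, so $F_x$ is a polynomial of degree at most $s-1$; thus $F_x \in P_{s-1}(\mathbb{R}^d)$. This is the whole point of the spherical setting: working with inner products keeps the degree at $s-1$, whereas the squared distances $\|x-\xi\|^2$ used in Theorem \ref{main} have degree two and force one into the larger space $W_{s-1}(\mathbb{R}^d)$.

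Next I would check that $F_x$ satisfies the hypotheses of Lemma \ref{key lemma} on $\Omega = S^{d-1}$. Since $(x,x)=1$ for $x \in S^{d-1}$, we obtain
\[
F_x(x) = \prod_{j=1,2,\ldots,s,\, j \ne i} \frac{1-\beta_j}{\beta_i-\beta_j},
\]
which is the constant $k_i$ appearing in the statement, independent of $x$. For distinct $x,y \in X$ the inner product $(x,y)$ equals some $\beta_m$: if $m=i$ the numerator reproduces the denominator and $F_x(y)=1$, whereas if $m \ne i$ the factor indexed by $m$ contains $\beta_m - \beta_m$ and $F_x(y)=0$. Symmetry of the inner product gives $F_x(y)=F_y(x)$.

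Finally I would compute $N = \dim \mathcal{P}(\Omega)$ with $\mathcal{P}(\mathbb{R}^d)=P_{s-1}(\mathbb{R}^d)$ and $\Omega = S^{d-1}$. By the dimension formula for $P_{s-1}(S^{d-1})$ recorded in the Preliminaries (part (ii) with $l=s-1$), one has $\dim P_{s-1}(S^{d-1}) = \binom{d+s-2}{s-1} + \binom{d+s-3}{s-2}$, which is exactly the $N$ in the statement. Since $|X| \geq 2N$, Lemma \ref{key lemma} applies and shows that $k_i$ is an integer with $|k_i| \leq U(N)$; as $i$ was arbitrary, the conclusion holds for every $i=1,2,\ldots,s$.

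I do not expect a genuine obstacle here, since Lemma \ref{key lemma} does all the spectral work and the verification of its hypotheses is routine; the only thing to get right is the bookkeeping. Specifically, I would be careful that the dimension fed into the lemma is that of the restriction $P_{s-1}(S^{d-1})$ to the sphere, not that of the ambient space $P_{s-1}(\mathbb{R}^d)$, because Lemma \ref{key lemma} requires $N=\dim \mathcal{P}(\Omega)$ for the set $\Omega \supseteq X$ on which $X$ actually lies. This passage to the sphere is precisely what produces the smaller value of $N$ and hence the bound on $|X|$ that is less restrictive than in the Euclidean case.
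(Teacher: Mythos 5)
Your proposal is correct and coincides with the paper's own proof: the same polynomial $F_x(\xi)=\prod_{j\ne i}\bigl((x,\xi)-\beta_j\bigr)/(\beta_i-\beta_j)$ is used, the same verification of the hypotheses of Lemma \ref{key lemma} is made, and the same dimension count $N=\dim P_{s-1}(S^{d-1})$ is fed into the lemma. Your added remarks about degree bookkeeping and restricting to the sphere simply make explicit what the paper leaves implicit.
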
  
	
	\begin{proof}
	 We fix $i \in \{1,2,\ldots, s\}$. For each $x \in X$, we define the polynomial 
		\[
		F_x (\xi)= \prod_{j=1,2,\ldots,s, j \ne i} \frac{(x,\xi)-\beta_j  }{\beta_i-\beta_j}. 
		\]
	Then $F_x \in P_{s-1}(S^{d-1})$, $F_x(x)=\prod_{j \ne i}(1-\beta_j)/(\beta_i-\beta_j)$, 
and $\{F_x\}$ satisfies the condition in Lemma \ref{key lemma}. Hence this theorem follows. 
	\end{proof}
 A finite $X\subset S^{d-1}$ is said to be antipodal if $-x \in X$ for any $x \in X$. 
 Let $Y_X$ denote a subset of an antipodal set $X$ such that $Y_X \cup (-Y_X) = X$ and $|Y_X|=|X|/2$. 
 If $X$ is an antipodal spherical $s$-distance set, then $Y_X$ is an $(s-1)$-distance set. 
In the following theorems for antipodal spherical $s$-distance sets, 
the inequality for $|X|$ is not restrictive in comparison with that in Theorem \ref{sphere}.

	\begin{theorem}\label{anti1}
	Let $X$ be an antipodal $s$-distance set in $S^{d-1}$ where $s$ is an odd integer at least $5$. 
	Let $ B(X) = \{-1, \pm \beta_1, \pm \beta_2, \ldots, \pm \beta_{\frac{s-1}{2}} \}$. \\ 
$(1)$ Let $N=\binom{d+s-4}{s-3}$.
	If $|X| \geq 4 N$, then  
\begin{equation} \label{anti1(1)}
\prod_{j=1,2,\ldots, \frac{s-1}{2}, j \ne i}\frac{1-\beta_j^2}{\beta_i^2-\beta_j^2}
\end{equation}  
is an integer $k_i$ for each $i=1,2,\ldots, (s-1)/2$. Furthermore $|k_i| \leq U(N)$. \\
$(2)$ Let $N=\binom{d+s-3}{s-2}$.
If $|X| \geq 4 N+2$, then  
\begin{equation} \label{anti1(2)}
\frac{1}{\beta_i} \prod_{j=1,2,\ldots, \frac{s-1}{2}, j \ne i}\frac{1-\beta_j^2}{\beta_i^2-\beta_j^2}
\end{equation}  
is an integer $k_i$ for each $i=1,2,\ldots, (s-1)/2$. Furthermore $|k_i| \leq \lfloor \sqrt{2N^2/(N+1)} \rfloor$. \\
	\end{theorem}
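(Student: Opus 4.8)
The plan is to push both parts through the machinery behind Lemma \ref{key lemma}, but applied to the half-set $Y_X$ rather than to $X$ itself. The reason for passing to $Y_X$ is the antipodal collision $(x,-x)^2 = (x,x)^2 = 1$, which would otherwise corrupt the diagonal/off-diagonal pattern of the auxiliary functions. The structural fact that makes everything work is that for \emph{distinct} $x,y \in Y_X$ we have $y \neq \pm x$, so $(x,y) = \pm \beta_j$ for a unique $j$, and in particular $(x,y)^2 \in \{\beta_1^2, \ldots, \beta_{(s-1)/2}^2\}$. I would record this observation first, together with the numerology $|Y_X| = |X|/2$, since the size hypotheses $|X| \geq 4N$ and $|X| \geq 4N+2$ are designed exactly to turn into usable bounds on $|Y_X|$.

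For part $(1)$, fix $i$ and, for each $x \in Y_X$, set
\[
F_x(\xi) = \prod_{j=1,\ldots,(s-1)/2,\, j \ne i} \frac{(x,\xi)^2 - \beta_j^2}{\beta_i^2 - \beta_j^2}.
\]
This is an even polynomial of degree $s-3$, so $F_x \in P_{s-3}^{\ast}(\mathbb{R}^d)$, and its restriction to $S^{d-1}$ lies in $P_{s-3}^{\ast}(S^{d-1})$, which has dimension $\binom{d+s-4}{s-3} = N$ by the dimension formula $\dim P_l^{\ast}(S^{d-1}) = \binom{d+l-1}{l}$. A direct evaluation gives $F_x(x) = k_i$ (the quantity in (\ref{anti1(1)})), $F_x(y) = 1$ when $(x,y)^2 = \beta_i^2$, $F_x(y) = 0$ when $(x,y)^2 = \beta_j^2$ with $j \ne i$, and $F_x(y) = F_y(x)$ throughout. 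Applying Lemma \ref{key lemma} to the set $Y_X$, whose size satisfies $|Y_X| = |X|/2 \geq 2N$, then yields at once that $k_i$ is an integer with $|k_i| \leq U(N)$.

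For part $(2)$ I would use the odd analogue
\[
F_x(\xi) = \frac{(x,\xi)}{\beta_i} \prod_{j=1,\ldots,(s-1)/2,\, j \ne i} \frac{(x,\xi)^2 - \beta_j^2}{\beta_i^2 - \beta_j^2},
\]
an odd polynomial of degree $s-2$ lying in $P_{s-2}^{\ast}(S^{d-1})$, of dimension $\binom{d+s-3}{s-2} = N$. Again $F_x(x)$ equals the quantity in (\ref{anti1(2)}), but now $F_x(y) = \pm 1$ according to whether $(x,y) = \pm\beta_i$, and $F_x(y)=0$ otherwise. The main obstacle is precisely that these off-diagonal values are $0,\pm 1$ rather than $0,1$, so Lemma \ref{key lemma} does not apply verbatim; this is exactly what forces the different bound in $(2)$. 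Instead I would write the symmetric matrix $M = (F_x(y))_{x,y \in Y_X} = k_i I + A$, where $A$ is a symmetric $(0,\pm 1)$-matrix with zero diagonal. By Lemma \ref{lem} the rank of $M$ is at most $N$, so the zero eigenvalue of $M$ has multiplicity at least $|X|/2 - N \geq N+1$ (using $|X| \geq 4N+2$), which strictly exceeds the total multiplicity $\leq N$ of all nonzero eigenvalues; Lemma \ref{integer} then gives that $k_i$ is an integer.

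It remains to extract the bound. Since $-k_i$ is an eigenvalue of the $(0,\pm 1)$-matrix $A$ of multiplicity at least $n - N$, where $n = |X|/2$, Lemma \ref{k_bound} gives $k_i^2 \leq (n-1)N/(n-N)$. This expression is decreasing in $n$, so evaluating at the extreme value $n = 2N+1$ allowed by the hypothesis yields $k_i^2 \leq 2N^2/(N+1)$, and integrality of $k_i$ upgrades this to $|k_i| \leq \lfloor \sqrt{2N^2/(N+1)} \rfloor$. The only points needing genuine care are the bookkeeping that $s$ being odd makes the two auxiliary polynomials honestly even and odd of the claimed degrees $s-3$ and $s-2$, and the multiplicity count verifying the hypothesis of Lemma \ref{integer}; everything else is a routine evaluation.
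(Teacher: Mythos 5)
Your proposal is correct and follows essentially the same route as the paper: part $(1)$ is the even polynomial in $P_{s-3}^{\ast}(S^{d-1})$ fed to Lemma \ref{key lemma} on the half-set $Y_X$, and part $(2)$ is the odd polynomial in $P_{s-2}^{\ast}(S^{d-1})$ handled directly via Lemmas \ref{lem}, \ref{integer} and \ref{k_bound} because the off-diagonal values are $0,\pm 1$. Your write-up merely makes explicit a few points the paper leaves implicit (why one must pass to $Y_X$, and the monotonicity step giving $k_i^2 \leq 2N^2/(N+1)$), but the argument is the same.
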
 
	\begin{proof}
$(1)$: We fix $i \in \{1,2,\ldots, (s-1)/2 \}$. For each $x \in Y_X$, we define the polynomial 
		\[
		F_x (\xi)= \prod_{j=1,2,\ldots, \frac{s-1}{2},j \ne i} \frac{(x,\xi)^2-\beta_j^2  }{\beta_i^2-\beta_j^2}.
		\]
	Then $F_x \in P_{s-3}^{\ast}(S^{d-1})$, $F_x(x)= \prod_{j \ne i}(1-\beta_j^2)/(\beta_i^2-\beta_j^2)$, and $\{F_x\}$ satisfies the condition in Lemma \ref{key lemma}. Hence $(1)$ follows. \\
$(2)$:  For each $x \in Y_X$, we define the polynomial 
		\[
		F_x (\xi)= \frac{(x,\xi)}{\beta_i} \prod_{j=1,2,\ldots, \frac{s-1}{2},j \ne i} \frac{(x,\xi)^2-\beta_j^2  }{\beta_i^2-\beta_j^2}.
		\]
	Then $F_x \in P_{s-2}^{\ast}(S^{d-1})$, 
$F_x(x)=1/\beta_i \prod_{j \ne i}(1-\beta_j^2)/(\beta_i^2-\beta_j^2)$, 
$F_x(y)=\pm 1$ if $(x,y)=\pm \beta_i$, $F_x(y)=0$ if $(x,y) \ne \pm \beta_i$, 
and $F_x(y)=F_y(x)$ for all $x,y\in Y_X$. 
Let $M$ be the symmetric matrix $(F_x(y))_{x,y \in Y_X}$. 
Note $|Y_X|\geq 2N+1$.
Since the rank of $M$ is at most $N$, the multiplicity of a non-zero eigenvalue is at most $N$, 
and the multiplicity of the zero eigenvalue is at least $N+1$. By Lemma \ref{integer}, 
$1/\beta_i \prod_{j \ne i}(1-\beta_j^2)/(\beta_i^2-\beta_j^2)$ is an integer $k_i$ for any $i$. 

Note that $-k_i$ is eigenvalue of $A:=M-k_i I$ of multiplicity at least $|Y_X|-N$. 
By Lemma \ref{k_bound} and $|Y_X|\geq 2N+1$, we have 
	\[
	k_i^2 \leq \frac{N(|Y_X|-1)}{|Y_X|-N}= N+ \frac{N(N-1)}{|Y_X|-N} \leq \frac{2 N^2}{N+1}.
	\] 
	Therefore $(2)$ follows. 
	\end{proof}

	\begin{theorem} \label{anti2}
	Let $X$ be an antipodal $s$-distance set in $S^{d-1}$ where $s$ is an even integer at least $4$. 
	Let $ B(X) = \{-1,\beta_1=0, \pm \beta_2, \ldots, \pm \beta_{\frac{s}{2}} \}$. \\
 $(1)$ Let $N=\binom{d+s-3}{s-2}$.
	If $|X| \geq 4 N$, then 
\begin{equation} \label{anti2(1)}
 \prod_{j=1,2,\ldots, \frac{s}{2}, j \ne i}\frac{1-\beta_j^2}{\beta_i^2-\beta_j^2}
\end{equation}  
is an integer $k_i$ for each $i=1,2,\ldots, s/2 $. Furthermore $|k_i| \leq U(N)$. \\
 $(2)$ Let $N=\binom{d+s-4}{s-3}$.
	If $|X| \geq 4 N+2$, then 
\begin{equation} \label{anti2(2)}
 \frac{1}{\beta_i} \prod_{j=2,3,\ldots, \frac{s}{2}, j \ne i}\frac{1-\beta_j^2}{\beta_i^2-\beta_j^2}
\end{equation}  
is an integer $k_i$ for each $i=2,3,\ldots, s/2 $. 
Furthermore $|k_i| \leq \lfloor \sqrt{2N^2/(N+1)} \rfloor$. 
	\end{theorem}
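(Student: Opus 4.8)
\section*{Proof proposal for Theorem \ref{anti2}}

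The plan is to mirror the proof of Theorem \ref{anti1} almost verbatim, working throughout with the half-set $Y_X$ (which is an $(s-1)$-distance set) rather than with $X$ itself. The single observation that makes this transfer legitimate is that distinct points of $Y_X$ never have inner product $-1$: if $x,y \in Y_X$ satisfy $(x,y)=-1$ then $y=-x$, contradicting $Y_X \cap (-Y_X)=\emptyset$. Hence for distinct $x,y \in Y_X$ the value $(x,y)$ lies in $\{0,\pm\beta_2,\ldots,\pm\beta_{s/2}\}$, so the inner product $-1 \in B(X)$ never interferes with the evaluations below. This is exactly why everything is run on $Y_X$.

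For part $(1)$ I would fix $i \in \{1,\ldots,s/2\}$ and, for each $x \in Y_X$, set
\[
F_x(\xi)=\prod_{j=1,2,\ldots,s/2,\, j\ne i}\frac{(x,\xi)^2-\beta_j^2}{\beta_i^2-\beta_j^2}.
\]
This is an even polynomial of degree $s-2$ in $(x,\xi)$, so $F_x \in P_{s-2}^{\ast}(S^{d-1})$, whose dimension is $\binom{d+s-3}{s-2}=N$ by the dimension formula for $P_l^{\ast}(S^{d-1})$. Because $(x,x)=1$ one gets $F_x(x)=k_i$; and since the product contains the $j=1$ factor $(x,\xi)^2/\beta_i^2$ (recall $\beta_1=0$), one checks that $F_x(y)=1$ when $(x,y)=\pm\beta_i$ and $F_x(y)=0$ for every other admissible value of $(x,y)$ in $Y_X$. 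As $F_x(y)$ depends only on $(x,y)$, it is symmetric, so $\{F_x\}_{x\in Y_X}$ satisfies the hypotheses of Lemma \ref{key lemma}. The condition $|X|\ge 4N$ is precisely $|Y_X|\ge 2N$, and the lemma yields both the integrality of $k_i$ and the bound $|k_i|\le U(N)$.

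For part $(2)$, since $\beta_1=0$ I would instead use, for $i \in \{2,\ldots,s/2\}$ and $x \in Y_X$, the odd polynomial
\[
F_x(\xi)=\frac{(x,\xi)}{\beta_i}\prod_{j=2,3,\ldots,s/2,\, j\ne i}\frac{(x,\xi)^2-\beta_j^2}{\beta_i^2-\beta_j^2}.
\]
Here the linear factor $(x,\xi)/\beta_i$ takes over the role of the absent $j=1$ quadratic factor, forcing $F_x(y)=0$ whenever $(x,y)=0$; this is why $j=1$ is dropped from the product and why $i$ must start at $2$ (the case $i=1$ would divide by $\beta_1=0$). This $F_x$ has degree $s-3$, so $F_x \in P_{s-3}^{\ast}(S^{d-1})$, of dimension $\binom{d+s-4}{s-3}=N$, and $F_x(x)=k_i$ while $F_x(y)=\pm1$ when $(x,y)=\pm\beta_i$ and $0$ otherwise. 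Because the off-diagonal entries are now $0$ or $\pm1$, Lemma \ref{key lemma} no longer applies directly, so I would repeat the direct argument of Theorem \ref{anti1}$(2)$. Writing $M=(F_x(y))_{x,y\in Y_X}$, Lemma \ref{lem} gives that the rank of $M$ is at most $N$; since $|X|\ge 4N+2$ means $|Y_X|\ge 2N+1$, the zero eigenvalue of $M$ has multiplicity at least $N+1$, which exceeds the maximal multiplicity $N$ of any nonzero eigenvalue, so Lemma \ref{integer} forces $k_i\in\mathbb{Z}$. Finally $-k_i$ is an eigenvalue of $A=M-k_iI$ of multiplicity at least $|Y_X|-N$, and Lemma \ref{k_bound} together with $|Y_X|\ge 2N+1$ yields
\[
k_i^2 \le \frac{N(|Y_X|-1)}{|Y_X|-N}=N+\frac{N(N-1)}{|Y_X|-N}\le \frac{2N^2}{N+1},
\]
that is, $|k_i|\le\lfloor\sqrt{2N^2/(N+1)}\rfloor$.

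I do not expect a genuinely hard step: the argument is structurally identical to Theorem \ref{anti1}, and the only items demanding care are bookkeeping ones, namely verifying that each $F_x$ lands in the claimed space $P_{s-2}^{\ast}$ or $P_{s-3}^{\ast}$ of the asserted dimension $N$, and checking that the evaluations $F_x(y)$ come out in $\{0,1\}$ or $\{0,\pm1\}$. This latter verification is the only place where the even-$s$ case differs substantively from the odd-$s$ case, and it is precisely where the restriction to $Y_X$ (excluding inner product $-1$) and the special treatment of $\beta_1=0$ enter.
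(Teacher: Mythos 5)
Your proposal is correct and follows essentially the same route as the paper: identical choices of $F_x$ for both parts, Lemma \ref{key lemma} applied to $Y_X$ for part $(1)$, and for part $(2)$ the same direct rank/eigenvalue-multiplicity argument via Lemmas \ref{lem}, \ref{integer} and \ref{k_bound} that the paper invokes by reference to the proof of Theorem \ref{anti1}$(2)$. The observations you spell out explicitly (that $-1$ never occurs as an inner product within $Y_X$, and that the $\beta_1=0$ factor, respectively the linear factor $(x,\xi)/\beta_i$, handles the inner product $0$) are exactly the points the paper leaves implicit.
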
 
	\begin{proof}
$(1)$: We fix $i \in \{1,2,\ldots, s/2 \}$. For each $x \in Y_X$, we define the polynomial 
		\[
		F_x (\xi)= \prod_{j=1,2,\ldots, \frac{s}{2},j \ne i} \frac{(x,\xi)^2-\beta_j^2  }{\beta_i^2-\beta_j^2}.
		\]
	Then $F_x \in P_{s-2}^{\ast}(S^{d-1})$, $F_x(x)= \prod_{j \ne i}(1-\beta_j^2)/(\beta_i^2-\beta_j^2)$, and $\{F_x\}$ satisfies the condition in Lemma \ref{key lemma}. Hence $(1)$ follows. \\
$(2)$:  For each $x \in Y_X$, we define the polynomial 
		\[
		F_x (\xi)= \frac{(x,\xi)}{\beta_i} \prod_{j=2,3,\ldots, \frac{s}{2} ,j \ne i} \frac{(x,\xi)^2-\beta_j^2  }{\beta_i^2-\beta_j^2}.
		\]
	Then $F_x \in P_{s-3}^{\ast}(S^{d-1})$. 
    By the same manner as the proof of Theorem \ref{anti1} (2), this proof is complete.   
	\end{proof}
	By the above theorems, we show the rationality of the inner products for
 a large antipodal $s$-distance set in $S^{d-1}$.  
	\begin{theorem} \label{rational}
	Suppose $X$ is an antipodal $s$-distance set in $S^{d-1}$ with $s\geq 4$.  
Suppose $|X| \geq 4 \binom{d+s-3}{s-2}+2$. 
Then $\beta$ is rational for any $\beta \in B(X)$. 
	\end{theorem}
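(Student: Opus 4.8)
The plan is to read off the rationality of each inner product directly from Theorems \ref{anti1} and \ref{anti2} by forming the quotient of the two integers those theorems produce. First I would record the effect of antipodality on $B(X)$: since $-y\in X$ whenever $y\in X$, the set $B(X)$ is closed under negation, except that the value $+1$ never occurs because the pair $(x,-y)$ with $(x,y)=-1$ degenerates to $(x,x)$. Hence, exactly as in the statements of Theorems \ref{anti1} and \ref{anti2}, we have $B(X)=\{-1,\pm\beta_1,\dots,\pm\beta_{(s-1)/2}\}$ when $s$ is odd and $B(X)=\{-1,\beta_1=0,\pm\beta_2,\dots,\pm\beta_{s/2}\}$ when $s$ is even. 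The values $-1$ and $0$ are rational, so it suffices to prove that each positive representative $\beta_i$ is rational, and rationality of $\beta_i$ immediately gives rationality of $-\beta_i$.

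Next I would check that the single hypothesis $|X|\geq 4\binom{d+s-3}{s-2}+2$ is strong enough to invoke both part $(1)$ and part $(2)$ of the relevant theorem. Because $\binom{d+s-3}{s-2}\geq\binom{d+s-4}{s-3}$ by Pascal's rule, the quantity $4\binom{d+s-3}{s-2}+2$ dominates each of the four thresholds $4\binom{d+s-4}{s-3}$, $4\binom{d+s-3}{s-2}+2$, $4\binom{d+s-3}{s-2}$, and $4\binom{d+s-4}{s-3}+2$ appearing in Theorems \ref{anti1} and \ref{anti2}. Thus for odd $s\geq 5$ both conclusions of Theorem \ref{anti1} hold, and for even $s\geq 4$ both conclusions of Theorem \ref{anti2} hold.

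The core of the argument is a cancellation between the two integers. For odd $s$, writing $P_i=\prod_{j\ne i}(1-\beta_j^2)/(\beta_i^2-\beta_j^2)$, Theorem \ref{anti1} gives that $k_i=P_i$ and $k_i'=P_i/\beta_i$ are both integers; since $\beta_j\neq\pm 1$ (otherwise two points coincide or are antipodal) and the positive representatives are pairwise distinct, we have $P_i\neq 0$, so
\[
\beta_i=\frac{k_i}{k_i'}\in\mathbb{Q}.
\]
For even $s$, the $j=1$ factor in the product of Theorem \ref{anti2}$(1)$ equals $1/\beta_i^2$ because $\beta_1=0$, so with $Q_i=\prod_{j=2,\,j\ne i}(1-\beta_j^2)/(\beta_i^2-\beta_j^2)\neq 0$ one has $k_i=Q_i/\beta_i^2$ and $k_i'=Q_i/\beta_i$ for $i\geq 2$, whence
\[
\beta_i=\frac{k_i'}{k_i}\in\mathbb{Q},
\]
while $\beta_1=0$ is already rational. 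In either parity every $\beta\in B(X)$ is rational.

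I expect the only delicate points to be bookkeeping rather than conceptual: matching the index ranges of the two products so that the quotient collapses to exactly $\beta_i$ (and not $\beta_i^2$ or $1/\beta_i$), confirming that $P_i$ and $Q_i$ are nonzero so the quotients are defined, and verifying the binomial inequalities that let the single cardinality hypothesis trigger both halves of the earlier theorems. No new estimate or eigenvalue computation is required, since all of the analytic work has already been done inside Theorems \ref{anti1} and \ref{anti2}.
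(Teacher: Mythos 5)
Your proposal is correct and follows essentially the same route as the paper: the paper's proof simply observes that the quantities (\ref{anti1(1)}), (\ref{anti1(2)}), (\ref{anti2(1)}), (\ref{anti2(2)}) are integers under the stated cardinality hypothesis and divides (\ref{anti1(1)}) by (\ref{anti1(2)}) (odd $s$) or (\ref{anti2(2)}) by (\ref{anti2(1)}) (even $s$) to obtain $\beta_i\in\mathbb{Q}$. Your write-up just makes explicit the bookkeeping the paper leaves implicit (the dominance of the single threshold over all four, the nonvanishing of the products, and the exact cancellation yielding $\beta_i$ rather than $\beta_i^2$), all of which checks out.
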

	\begin{proof}
	By assumption, the values (\ref{anti1(1)}), (\ref{anti1(2)}), (\ref{anti2(1)}), and (\ref{anti2(2)}) are
	integers. 
	Dividing (\ref{anti1(1)}) by (\ref{anti1(2)}), or (\ref{anti2(2)}) by (\ref{anti2(1)}), 
we have every inner product $\beta_i$ is a rational number.  
	\end{proof}
	
	\begin{remark}
	Bannai--Damerell \cite{Bannai-Damerell1, Bannai-Damerell2} proved the result about the non-existence of tight spherical designs. 
	They showed the rationality of inner products of the finite set by the theory of 
	association schemes. The rationality played an important role to prove the non-existence of tight designs. 
	Theorem \ref{rational} shows another proof of the rationality of the inner products in a tight spherical 
	$(2s-1)$-design for sufficiently large $d$. 
	The rationality of inner products might be useful for a classification problem. 
	\end{remark}
	Finally remark that by the same method in the present paper, we can obtain similar theorems to  
Theorems \ref{main} or \ref{sphere} for spherical polynomial spaces \cite[Chapter 14]{Godsil} (for example, 
the Johnson scheme, or the Hamming scheme). 

	\quad
	
	\noindent
	 \textbf{Acknowledgments.} 
This research has been done during the stay at the
University of Texas at Brownsville, under the sponsorship of the Japan Society for the
Promotion of Science. The encouragement of Oleg Musin, who was the host of the stay, was invaluable. 
The author also thanks to Eiichi Bannai, Alexander Barg, 
Akihiro Munemasa, Makoto Tagami and Masashi Shinohara for useful discussions and comments.

\end{document}